\newtheorem{theorem}{Theorem}[section]
\newtheorem{corollary}[theorem]{Corollary}
\newtheorem{algorithm}[theorem]{Algorithm}
\newtheorem{example}{Example}[section]
\newcommand{\ten}[1]{\mathcal{#1}}
\title{Fast Hankel Tensor-Vector Products and Application to Exponential Data Fitting}
\author{
Weiyang Ding\thanks{E-mail: dingw11@fudan.edu.cn. School of Mathematical Sciences, Fudan University, Shanghai, 200433, P. R. of China. This author is supported by 973 Program Project under grant 2010CB327900.}
\and
Liqun Qi\thanks{Email: liqun.qi@polyu.edu.hk. Department of Applied Mathematics, The Hong Kong Polytechnic University, Hung Hom, Kowloon, Hong Kong.
This author is supported by the Hong Kong Research Grant Council (Grant No. PolyU 502510, 502111, 501212, 501913).}
\and
Yimin Wei\thanks{E-mail: ymwei@fudan.edu.cn and yimin.wei@gmail.com. School of Mathematical Sciences and Key Laboratory of Mathematics for Nonlinear Sciences, Fudan University, Shanghai, 200433, P. R. of China. This author is supported by the National Natural Science Foundation of China under grant 11271084.}
}
\date{\today}
\begin{document}

\maketitle

\begin{abstract}
  This paper is contributed to a fast algorithm for Hankel tensor-vector products. For this purpose, we first discuss a special class of Hankel tensors that can be diagonalized by the Fourier matrix, which is called \emph{anti-circulant} tensors. Then we obtain a fast algorithm for Hankel tensor-vector products by embedding a Hankel tensor into a larger anti-circulant tensor. The computational complexity is about $\ten{O}(m^2 n \log mn)$ for a square Hankel tensor of order $m$ and dimension $n$, and the numerical examples also show the efficiency of this scheme. Moreover, the block version for multi-level block Hankel tensors is discussed as well. Finally, we apply the fast algorithm to exponential data fitting and the block version to 2D exponential data fitting for higher performance.

  \hspace{-16pt}{\bf Keywords:}
  Hankel tensor, block Hankel tensor, anti-circulant tensor, fast tensor-vector product, fast Fourier transformation, higher-order singular value decomposition, exponential data fitting.

  \hspace{-16pt}{\bf 2000 AMS Subject Classification:}
  15A69, 15B05, 65T50, 68M07.
\end{abstract}

\newpage

\section{Introduction}

Hankel structures arise frequently in signal processing \cite{Vadim2001}. Besides Hankel matrices, tensors with different Hankel structures have also been applied. As far as we know, the concept of ``Hankel tensor'' was first introduced in \cite{luque2003hankel} by Luque and Thibon.  Papy and Boyer et al. employed Hankel tensors and tensors with other Hankel structures in exponential data fitting in \cite{boyer2005delayed,papy2005exponential,papy2009exponential}. Then Badeau and Boyer discussed the higher-order singular value decompositions (HOSVD) of some structured tensors, including Hankel tensors, in \cite{badeau2008fast}. Recently, Qi investigated the spectral properties of Hankel tensor in \cite{qi2013hankel} largely via the generating function, Song and Qi proposed some spectral properties of Hilbert tensors in \cite{song2014some}, which are special Hankel tensors.

A tensor $\ten{H} \in \mathbb{C}^{n_1 \times n_2 \times \dots \times n_m}$ is called a \emph{Hankel tensor}, if
$$
\ten{H}_{i_1 i_2 \dots i_m} = \phi(i_1+i_2+\dots+i_m)
$$
for all $i_k = 0,1,\dots,n_k-1$ ($k = 1,2,\dots,m$). We call $\ten{H}$ a \emph{square Hankel tensor}, when $n_1=n_2=\dots=n_m$. Note that the degree of freedom of a Hankel tensor is $d_\ten{H}:=n_1+n_2+\dots+n_m-m+1$. Thus a vector ${\bf h}$ of length $d_\ten{H}$, which is called the \emph{generating vector} of $\ten{H}$, defined by
$$
h_k = \phi(k),\ k = 0,1,\dots,d_\ten{H}-1
$$
can totally determine the Hankel tensor $\ten{H}$, when the tensor size is fixed. Further, if the entries of ${\bf h}$ can be written as
$$
h_k = \int_{-\infty}^{+\infty} t^k f(t) {\rm d}t,
$$
we then  call $f(t)$ the \emph{generating function} of $\ten{H}$. The generating function of a square Hankel tensor is essential for studying eigenvalues, positive semi-definiteness,  and copositiveness etc. (cf. \cite{qi2013hankel}).

The fast algorithm for Hankel or Toeplitz matrix-vector products using fast Fourier transformations (FFT) is well-known (cf. \cite{chan2007introduction,ng2004iterative}). However the topics on Hankel tensor computations are seldom discussed. We propose an analogous fast algorithm for Hankel tensor-vector products and application to signal processing. In Section 2, we begin with a special class of Hankel tensors called \emph{anti-circulant} tensors. By using some properties of anti-circulant tensors, the Hankel tensor-vector products can be performed in complexity $\ten{O}(m d_\ten{H} \log d_\ten{H})$, which is described in Section 3. Then in Section 4, we apply the fast algorithm and its block version to 1D exponential data fitting and 2D exponential data fitting. Finally, we draw some conclusions and display some future research in Section 5.

\section{Anti-Circulant Tensor}

Circulant matrix is famous, which is a special class of  Toeplitz matrices \cite{chan2007introduction,ng2004iterative}. The first column entries of a circulant matrix shift down when moving right, as shown in the following $3$-by-$3$ example
$$
\begin{bmatrix}
c_0 & c_2 & c_1 \\
c_1 & c_0 & c_2 \\
c_2 & c_1 & c_0
\end{bmatrix}.
$$
If the first column entries of a matrix shift up when moving right, as shown in the following
$$
\begin{bmatrix}
c_0 & c_1 & c_2 \\
c_1 & c_2 & c_0 \\
c_2 & c_0 & c_1
\end{bmatrix},
$$
then it is a special Hankel matrix. And we name it an
\emph{anti-circulant matrix}. Naturally, we generalize the
anti-circulant matrix to the tensor case. A square Hankel tensor
$\ten{C}$ of order $m$ and dimension  $n$ is called an
\emph{anti-circulant tensor}, if its generating vector ${\bf h}$
satisfies that
$$
h_{k} = h_{l}, \quad \text{if } k \equiv l \,({\rm mod}\, n).
$$
Thus the generating vector is periodic and displayed as
$$
{\bf h} = (\underbrace{h_0, h_1, \dots, h_{n-1}}_{{\bf c}^\top}, \underbrace{h_n, h_{n+1}, \dots, h_{2n-1}}_{{\bf c}^\top}, \dots, \underbrace{h_{(m-1)n}, \dots, h_{m(n-1)}}_{{\bf c}(0:n-m)^\top})^\top.
$$
Since the vector ${\bf c}$, which is exactly the ``first'' column $\ten{C}(:,0,\cdots,0)$, contains all the information about $\ten{C}$ and is more compact than the generating vector, we call it the \emph{compressed generating vector} of the anti-circulant tensor. For instance, a $3 \times 3 \times3$ anti-circulant tensor $\ten{C}$ is unfolded by mode-$1$ into
$$
{\rm Unfold}_1(\ten{C}) =
\left[\begin{array}{ccc}
c_0 & c_1 & c_2 \\
c_1 & c_2 & c_0 \\
c_2 & c_0 & c_1
\end{array}\right.
\left|\begin{array}{ccc}
c_1 & c_2 & c_0 \\
c_2 & c_0 & c_1 \\
c_0 & c_1 & c_2
\end{array}\right|
\left.\begin{array}{ccc}
c_2 & c_0 & c_1 \\
c_0 & c_1 & c_2 \\
c_1 & c_2 & c_0
\end{array}\right],
$$
and its compressed generating vector is ${\bf c} = [c_0,c_1,c_2]^\top$. Note that the degree of freedom of an anti-circulant tensor is always $n$ no matter how large its order $m$ will be.

\subsection{Diagonalization}

One of the essential properties of circulant matrices is that every circulant matrix can be diagonalized by the Fourier matrix, where the Fourier matrix of size $n$ is defined as $F_n = \big(\exp(-\frac{2\pi{\bf i}}{n}jk)\big)_{j,k=0,1,\dots,n-1}$. Actually, the Fourier matrix is exactly the Vandermonde matrix for the roots of unity, and it is also a unitary matrix up to the normalization factor
$$
F_n^{} F_n^\ast = F_n^\ast F_n^{} = n I_n^{},
$$
where $I_n$ is the identity matrix of $n \times n$ and $F_n^\ast$ is the conjugate transpose of $F_n$. We will show  that anti-circulant tensors also have a similar property, which brings much convenience for both analysis and computations.

In order to describe this property, we recall the definition of mode-$p$ tensor-matrix product first. It should be pointed out that the tensor-matrix products in this paper are slightly different with some standard notations (cf. \cite{de2000multilinear,kolda2009tensor}) just for easy use and simple descriptions. Let $\ten{A}$ be a tensor of $n_1 \times n_2 \times \dots \times n_m$ and $M$ be a matrix of $n_p \times n_p'$, then the mode-$p$ product $\ten{A} \times_p M$ is another tensor of $n_1 \times \dots \times n_{p-1} \times n_p' \times n_{p+1} \times \dots \times n_m$ defined as
$$
(\ten{A} \times_p M)_{i_1 \dots i_{p-1} j i_{p+1} \dots i_m} = \sum_{i_p=0}^{n_p-1} \ten{A}_{i_1 \dots i_{p-1} i_p i_{p+1} \dots i_m} M_{i_p j}.
$$
In the standard notation system, two indices in ``$M_{i_p j}$'' should be exchanged. There are some basic properties of the tensor-matrix products
\begin{enumerate}
  \item $\ten{A} \times_p M_p \times_q M_q = \ten{A} \times_q M_q \times_p M_p$, if $p \neq q$,
  \item $\ten{A} \times_p M_{p_1} \times_p M_{p_2} = \ten{A} \times_p (M_{p_1}M_{p_2})$,
  \item $\ten{A} \times_p M_{p_1} + \ten{A} \times_p M_{p_2} = \ten{A} \times_p (M_{p_1} + M_{p_2})$,
  \item $\ten{A}_1 \times_p M + \ten{A}_2 \times_p M = (\ten{A}_1+\ten{A}_2) \times_p M$.
\end{enumerate}
Particularly, when $\ten{A}$ is a matrix, the mode-$1$ and mode-$2$ products can be written as
$$
\ten{A} \times_1 M_1 \times_2 M_2 = M_1^\top \ten{A} M_2.
$$
Notice that this is totally different with $M_1^\ast \ten{A} M_2$! ($M_1^\ast$ is the conjugate transpose of $M_1$.) We will adopt  some notations from Qi's paper \cite{qi2013hankel}
\[
\begin{split}
\ten{A}{\bf x}^{m-1} &= \ten{A} \hspace{26pt} \times_2 {\bf x} \dots \times_m {\bf x}, \\
\ten{A}{\bf x}^{m} &= \ten{A} \times_1 {\bf x} \times_2 {\bf x} \dots \times_m {\bf x}.
\end{split}
\]

We are now ready to state our main result about anti-circulant tensors.
\begin{theorem}\label{circ_diag}
A square tensor of order $m$ and dimension  $n$ is an anti-circulant tensor if and only if it can be diagonalized by the Fourier matrix of size $n$, that is,
$$
\ten{C} = \ten{D}F_n^m := \ten{D} \times_1 F_n \times_2 F_n \dots \times_m F_n,
$$
where $\ten{D}$ is a diagonal tensor and ${\rm diag}(\ten{D}) = {\rm ifft}({\bf c})$. Here, ``${\rm ifft}$" is a Matlab-type symbol, an abbreviation of inverse fast Fourier transformation.
\end{theorem}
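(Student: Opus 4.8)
The plan is to establish both implications by one direct computation: expand the entries of $\ten{D}\times_1 F_n \times_2 F_n \cdots \times_m F_n$ for an arbitrary diagonal tensor $\ten{D}$ and compare with the definition of an anti-circulant tensor. Writing $d_j := \ten{D}_{jj\dots j}$ and using the mode-$p$ product together with the fact that $\ten{D}$ vanishes off the diagonal, one gets
\[
\big(\ten{D}\times_1 F_n \times_2 F_n \cdots \times_m F_n\big)_{i_1 i_2 \dots i_m} = \sum_{j=0}^{n-1} d_j \exp\!\Big(-\tfrac{2\pi{\bf i}}{n}\, j\,(i_1+i_2+\dots+i_m)\Big).
\]
The right-hand side depends on $i_1,\dots,i_m$ only through the sum $s = i_1+\dots+i_m$, and since $\exp(-\tfrac{2\pi{\bf i}}{n}jn)=1$ it is unchanged when $s$ is replaced by $s+n$. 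Denoting this quantity by $\psi(s)$, we see that $\ten{D}F_n^m$ is a Hankel tensor whose $(i_1,\dots,i_m)$-entry equals $\psi(i_1+\dots+i_m)$ and with $\psi(s)=\psi(l)$ whenever $s\equiv l\,({\rm mod}\,n)$ --- exactly an anti-circulant tensor. This proves the ``if'' direction, since any tensor of the form $\ten{D}F_n^m$ with $\ten{D}$ diagonal is anti-circulant.

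For the ``only if'' direction, and to identify ${\rm diag}(\ten{D})$, I would read the displayed formula as a discrete Fourier transform: for $s=0,1,\dots,n-1$ one has $\psi(s)=({\rm fft}({\bf d}))_s$ with ${\bf d}={\rm diag}(\ten{D})$. Now let $\ten{C}$ be anti-circulant with compressed generating vector ${\bf c}$; since $c_i = \ten{C}_{i0\dots0} = h_i$ for $i=0,\dots,n-1$ and the generating vector ${\bf h}$ of $\ten{C}$ is $n$-periodic, requiring $\ten{C}=\ten{D}F_n^m$ forces $\psi(s)=c_s$ for $s=0,\dots,n-1$, i.e.\ ${\bf c}={\rm fft}({\bf d})$, equivalently ${\bf d}={\rm ifft}({\bf c})$; invertibility of $F_n$ shows such a $\ten{D}$ both exists and is unique. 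Conversely, defining $\ten{D}$ by ${\rm diag}(\ten{D})={\rm ifft}({\bf c})$, the computation above gives $(\ten{D}F_n^m)_{i_1\dots i_m}=\psi(i_1+\dots+i_m)$ where $\psi$ is $n$-periodic and $\psi(s)=c_s=h_s$ on $\{0,\dots,n-1\}$; since ${\bf h}$ is also $n$-periodic, $\psi$ and ${\bf h}$ agree on the whole index range, so $\ten{D}F_n^m=\ten{C}$ entrywise.

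The argument has no deep obstacle; the care is entirely in the bookkeeping. The main thing to watch is the nonstandard index placement in the mode-$p$ product defined earlier, so that each of the $m$ factors contributes $\exp(-\tfrac{2\pi{\bf i}}{n}ji_k)$ and the exponents add cleanly; and the Matlab sign conventions for ${\rm fft}$ versus ${\rm ifft}$, so that one obtains ${\bf d}={\rm ifft}({\bf c})$ rather than ${\rm fft}({\bf c})$. I would also spell out the elementary periodicity step --- two functions sharing period $n$ and agreeing on $\{0,1,\dots,n-1\}$ agree on all of $\{0,1,\dots,m(n-1)\}$ --- since that is precisely what upgrades equality of the compressed generating vectors to equality of all tensor entries.
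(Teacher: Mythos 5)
Your proof is correct and follows the same overall strategy as the paper's: show that any $\ten{D}F_n^m$ with $\ten{D}$ diagonal is anti-circulant, identify ${\rm diag}(\ten{D})={\rm ifft}({\bf c})$, and then verify the converse. The only real difference is in how the diagonal is identified --- the paper extracts it by contracting $\frac{1}{n^m}\ten{C}(F_n^\ast)^m$ against ${\bf 1}^{m-1}$ and using $F_n^\ast{\bf 1}=n{\bf e}_0$, whereas you read the relation ${\bf c}={\rm fft}({\bf d})$ directly off the entrywise formula $\sum_{j} d_j \exp(-\tfrac{2\pi{\bf i}}{n}j(i_1+\dots+i_m))$ --- and in doing so you actually write out the entrywise verification (including the periodicity step) that the paper dismisses as ``direct to verify.''
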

\begin{proof}
It is direct to verify that a tensor that can be expressed as $\ten{D}F_n^m$ is anti-circulant. Thus we only need to prove that every anti-circulant tensor can be written into this form. And this can be done constructively.

First, assume that an anti-circulant tensor $\ten{C}$ could be written into $\ten{D}F_n^m$. Then how do we obtain the diagonal entries of $\ten{D}$ from $\ten{C}$? Since
\[
\begin{split}
{\rm diag}(\ten{D})
&= \ten{D} {\bf 1}^{m-1} = \frac{1}{n^m} \big(\ten{C}(F_n^\ast)^m\big) {\bf 1}^{m-1} = \frac{1}{n^m} \bar{F}_n \big(\ten{C} (F_n^\ast {\bf 1})^{m-1}\big) \\
&= \frac{1}{n} \bar{F}_n (\ten{C} {\bf e}_0^{m-1}) = \frac{1}{n} \bar{F}_n {\bf c},
\end{split}
\]
where ${\bf 1} = [1,1,\dots,1]^\top$, ${\bf e}_0 = [1,0,\dots,0]^\top$, and ${\bf c}$ is the compressed generating vector of $\ten{C}$, then the diagonal entries of $\ten{D}$ can be computed by an inverse fast Fourier transformation (IFFT)
$$
{\rm diag}(\ten{D}) = {\rm ifft}({\bf c}).
$$
Finally, it is enough to check that $\ten{C} = \ten{D}F^m$ with ${\rm diag}(\ten{D}) = {\rm ifft}({\bf c})$ directly. Therefore, every anti-circulant tensor is diagonalized by the Fourier matrix of proper size.
\end{proof}

From the expression $\ten{C} = \ten{D}F_n^m$, we have a corollary about the spectra of anti-circulant tensors. The definitions of tensor Z-eigenvalues and H-eigenvalues follow the ones in \cite{qi2005eigenvalues,qi2007eigenvalues}.
\begin{corollary}
An anti-circulant tensor $\ten{C}$ of order $m$ and  dimension  $n$ with the compressed generating vector ${\bf c}$ has a Z-eigenvalue/H-eigenvalue $n^{m-2}{\bf 1}^\top {\bf c}$ with eigenvector ${\bf 1}$. When $n$ is even, it has another Z-eigenvalue $n^{m-2}\tilde{\bf 1}^\top {\bf c}$ with eigenvector $\tilde{\bf 1}$, where $\tilde{\bf 1} = [1,-1,\dots,1,-1]^\top$; Moreover, this is also an H-eigenvalue if $m$ is even.
\end{corollary}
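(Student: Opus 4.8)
The plan is to read the corollary off the diagonalization $\ten{C}=\ten{D}F_n^m$ of Theorem~\ref{circ_diag}: first obtain a closed form for a general tensor--vector product $\ten{C}{\bf x}^{m-1}$, and then specialize ${\bf x}$ to ${\bf 1}$ and to $\tilde{\bf 1}$.

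\textbf{Step 1 (a formula for $\ten{C}{\bf x}^{m-1}$).} Starting from $\ten{C}=\ten{D}\times_1 F_n\times_2 F_n\dots\times_m F_n$ and using the mode-product rules listed above (in particular $\ten{A}\times_p F_n\times_p{\bf x}=\ten{A}\times_p(F_n{\bf x})$), the symmetry of $F_n$, and the fact that $\ten{D}$ is diagonal with ${\rm diag}(\ten{D})={\rm ifft}({\bf c})$, one gets
$$\ten{C}{\bf x}^{m-1}=F_n\big({\rm ifft}({\bf c})\odot(F_n{\bf x})^{[m-1]}\big),$$
where $\odot$ is the Hadamard product and $(\cdot)^{[m-1]}$ the entrywise $(m-1)$-st power. (This is exactly the identity that drives the fast algorithm of Section~3, so isolating it here is natural.)

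\textbf{Step 2 (specialize to the DC and Nyquist modes).} The vectors ${\bf 1}$ and $\tilde{\bf 1}$ are, up to the scalar $n$, two columns of $F_n$: a geometric-series computation gives $F_n{\bf 1}=n{\bf e}_0$ and, when $n$ is even, $F_n\tilde{\bf 1}=n{\bf e}_{n/2}$ (the second uses $(-1)^k=\exp(-\tfrac{2\pi{\bf i}}{n}\cdot\tfrac{n}{2}\,k)$). Substituting ${\bf x}={\bf 1}$ into the Step~1 identity, the Hadamard product collapses onto coordinate $0$; with $F_n{\bf e}_0={\bf 1}$ and ${\rm ifft}({\bf c})_0=\tfrac1n{\bf 1}^\top{\bf c}$ this yields $\ten{C}{\bf 1}^{m-1}=n^{m-2}({\bf 1}^\top{\bf c}){\bf 1}$. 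For even $n$, substituting ${\bf x}=\tilde{\bf 1}$ collapses everything onto coordinate $n/2$, so with $F_n{\bf e}_{n/2}=\tilde{\bf 1}$ and ${\rm ifft}({\bf c})_{n/2}=\tfrac1n\tilde{\bf 1}^\top{\bf c}$ one gets $\ten{C}\tilde{\bf 1}^{m-1}=n^{m-2}(\tilde{\bf 1}^\top{\bf c})\tilde{\bf 1}$. (Alternatively, these two identities follow directly from $\ten{C}_{i_1\dots i_m}=h_{i_1+\dots+i_m}$ with $h$ periodic mod $n$, since for fixed $i_1$ the sum $i_2+\dots+i_m$ realizes each residue mod $n$ exactly $n^{m-2}$ times, together with $(-1)^{i_2+\dots+i_m}=(-1)^{(i_2+\dots+i_m)\,\mathrm{mod}\,n}$ when $n$ is even.)

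\textbf{Step 3 (eigenvalue reading).} It remains to phrase these identities in the sense of \cite{qi2005eigenvalues,qi2007eigenvalues}. The identity $\ten{C}{\bf 1}^{m-1}=n^{m-2}({\bf 1}^\top{\bf c}){\bf 1}$ is the Z-eigenpair equation $\ten{C}{\bf x}^{m-1}=\lambda{\bf x}$ with ${\bf x}={\bf 1}$; since moreover ${\bf 1}^{[m-1]}={\bf 1}$, it is simultaneously the H-eigenpair equation $\ten{C}{\bf x}^{m-1}=\lambda{\bf x}^{[m-1]}$. For even $n$ the identity $\ten{C}\tilde{\bf 1}^{m-1}=n^{m-2}(\tilde{\bf 1}^\top{\bf c})\tilde{\bf 1}$ likewise exhibits the Z-eigenpair $(\tilde{\bf 1},n^{m-2}\tilde{\bf 1}^\top{\bf c})$; it is an H-eigenpair equation only when $\tilde{\bf 1}^{[m-1]}=\tilde{\bf 1}$, i.e.\ $(-1)^{(m-1)k}=(-1)^k$ for all $k$, which holds precisely when $m-1$ is odd, that is, when $m$ is even. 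This parity check — which is exactly what pins down the extra hypothesis in the statement — is the only genuinely delicate point; everything else is mechanical once Theorem~\ref{circ_diag} is in hand.
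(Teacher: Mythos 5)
Your proof is correct and follows essentially the same route as the paper: both read the eigenpairs off the diagonalization $\ten{C}=\ten{D}F_n^m$ of Theorem~\ref{circ_diag} together with $F_n{\bf 1}=n{\bf e}_0$ (resp.\ $F_n\tilde{\bf 1}=n{\bf e}_{n/2}$), collapsing the diagonal tensor onto a single coordinate. You in fact go slightly further than the paper, which only writes out the ${\bf 1}$ case and omits the $\tilde{\bf 1}$ computation and the parity check $\tilde{\bf 1}^{[m-1]}=\tilde{\bf 1}\iff m$ even as ``similar''; your Step~2--3 supplies exactly those omitted details.
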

\begin{proof}
Check it directly:
\[
\begin{split}
\ten{C}{\bf 1}^{m-1}
&= F_n^\top \big(\ten{D}(F_n{\bf 1})^{m-1}\big) = n^{m-1} F_n^\top (\ten{D}{\bf e}_0^{m-1}) = n^{m-1} \ten{D}_{1,1,\dots,1} \cdot F_n^\top {\bf e}_0 \\
&= n^{m-2} ({\bf e}_0^\top \bar{F}_n {\bf c}) {\bf 1} = (n^{m-2}{\bf 1}^\top {\bf c}) {\bf 1}.
\end{split}
\]
The proof of the rest part is similar, so we omit it.
\end{proof}

\subsection{Block Tensors}

Block structures arise in a variety of applications in scientific computing and engineering (cf. \cite{jin2002developments,Vadim2001}). And block matrices are successfully applied to many disciplines. Surely, we can also define block tensors. A \emph{block tensor} is understood as a tensor whose entries are also tensors. Let $\ten{A}$ be a block tensor. Then the $(j_1,j_2,\dots,j_m)$-th block is denoted as $\ten{A}^{(j_1 j_2 \dots j_m)}$, and the $(i_1,i_2,\dots,i_m)$-th entry of the $(j_1,j_2,\dots,j_m)$-th block is denoted as $\ten{A}_{i_1 i_2 \dots i_m}^{(j_1 j_2 \dots j_m)}$.

If a block tensor can be regarded as a Hankel tensor or an anti-circulant tensor with tensor entries, then we call it a \emph{block Hankel tensor} or a \emph{block anti-circulant tensor}, respectively. Moreover, its generating vector ${\bf h}^{(b)}$ or compressed generating vector ${\bf c}^{(b)}$ with tensor entries is called the \emph{block generating vector} or \emph{block compressed generating vector}, respectively. For instance, let $\ten{H}$ be a block Hankel tensor of size $(n_1 N_1) \times (n_2 N_2) \times \dots \times (n_m N_m)$ with blocks of size $n_1 \times n_2 \times \dots \times n_m$, then ${\bf h}^{(b)}$ is a block tensor of size $[(N_1+ N_2+ \cdots+N_m-m+1)n_1] \times n_2 \times \dots \times n_m$. Recall the definition of Kronecker product \cite{golub2012matrix}
$$
A \otimes B = \begin{bmatrix}
A_{11} B & A_{12} B & \cdots & A_{1 q} B \\
\vdots &\vdots & \ddots & \vdots \\
A_{p1} B & A_{p2} B & \cdots & A_{pq} B
\end{bmatrix},
$$
where $A$ and $B$ are two matrices of arbitrary sizes. Then it can be proved following Theorem \ref{circ_diag} that a block anti-circulant tensor $\ten{C}$ can be block-diagonalized by $F_N \otimes I$, that is,
$$
\ten{C} = \ten{D}^{(b)} (F_N \otimes I)^m,
$$
where $\ten{D}^{(b)}$ is a block diagonal tensor with diagonal blocks ${\bf c}^{(b)} \times_1 \big(\frac{1}{N}\bar{F}_N \otimes I\big)$ and
$\bar{F}_N$ is the conjugate of $F_N$.

Furthermore, when the blocks of a block Hankel tensor are also Hankel tensors, we call it a \emph{block Hankel tensor with Hankel blocks}, or BHHB tensor for short. Then its block generating vector can be reduced to a matrix, which is called the \emph{generating matrix} $H$ of a BHHB tensor
$$
H = [{\bf h}_0,{\bf h}_1,\dots,{\bf h}_{N_1+\dots+N_m-m}] \in \mathbb{C}^{(n_1+n_2+\dots+n_m-m+1)\times(N_1+N_2+\dots+N_m-m+1)},
$$
where ${\bf h}_k$ is the generating vector of the $k$-th Hankel block in ${\bf h}^{(b)}$. Similarly, when the blocks of a block anti-circulant tensor are also anti-circulant tensors, we call it a \emph{block anti-circulant tensor with anti-circulant blocks}, or BAAB tensor for short. Then its \emph{compressed generating matrix} $C$ is defined as
$$
C = [{\bf c}_0,{\bf c}_1,\dots,{\bf c}_{N-1}] \in \mathbb{C}^{n \times N},
$$
where ${\bf c}_k$ is the compressed generating vector of the $k$-th anti-circulant block in the block compressed generating vector ${\bf c}^{(b)}$. We can also verify that a BAAB tensor $\ten{C}$ can be diagonalized by $F_N \otimes F_n$, that is,
$$
\ten{C} = \ten{D} (F_N \otimes F_n)^m,
$$
where $\ten{D}$ is a diagonal tensor with diagonal ${\rm diag}(\ten{D}) = \frac{1}{nN}{\rm vec}(\bar{F}_n C\bar{F}_N)$, which can be computed by 2D inverse fast Fourier transformation (IFFT2). Here ${\rm vec}(\cdot)$ denotes the vectorization operator (cf. \cite{golub2012matrix}).

We can even define higher-level block Hankel tensors. For instance, a block Hankel tensor with BHHB blocks is called a \emph{level-$3$ block Hankel tensor}, and it is easily understood that a level-$3$ block Hankel tensor has the generating tensor of order $3$. Generally, a block Hankel or anti-circulant tensor with level-$(k$-$1)$ block Hankel or anti-circulant blocks is called a \emph{level-$k$ block Hankel or anti-circulant tensor}, respectively. Furthermore, a level-$k$ block anti-circulant tensor $\ten{C}$ can be diagonalized by $F_{n^{(k)}} \otimes F_{n^{(k-1)}} \otimes \dots \otimes F_{n^{(1)}}$, that is,
$$
\ten{C} = \ten{D} (F_{n^{(k)}} \otimes F_{n^{(k-1)}} \otimes \dots \otimes F_{n^{(1)}})^m,
$$
where $\ten{D}$ is a diagonal tensor with diagonal that can be computed by multi-dimensional inverse fast Fourier transformation.

\section{Fast Hankel Tensor-Vector Product}

General tensor-vector products without structures are very expensive
for the high order and the large size that a tensor could be of. For
a square tensor $\ten{A}$ of order $m$ and dimension  $n$, the
computational complexity of a tensor-vector product $\ten{A}{\bf
x}^{m-1}$ or $\ten{A}{\bf x}^m$ is $\ten{O}(n^m)$. However, since
Hankel tensors and anti-circulant tensors have very low degrees of
freedom, it can be expected that there is a much faster algorithm
for Hankel tensor-vector products. We focus on the following two
types of tensor-vector products
$$
{\bf y} = \ten{A} \times_2 {\bf x}_2 \cdots \times_m {\bf x}_m \text{ and } \alpha = \ten{A} \times_1 {\bf x}_1 \times_2 {\bf x}_2 \cdots \times_m {\bf x}_m,
$$
which will be extremely useful to applications.

The fast algorithm for anti-circulant tensor-vector products is easy to derive from Theorem \ref{circ_diag}. Let $\ten{C} = \ten{D}F_n^m$ be an anti-circulant tensor of order $m$ and dimension  $n$ with the compressed generating vector ${\bf c}$. Then for vectors ${\bf x}_2, {\bf x}_3, \dots, {\bf x}_m \in \mathbb{C}^n$, we have
$$
{\bf y} = \ten{C} \times_2 {\bf x}_2 \cdots \times_m {\bf x}_m = F_n (\ten{D} \times_2 F_n{\bf x}_2 \cdots \times_m F_n{\bf x}_m).
$$
Recall that ${\rm diag}(\ten{D}) = {\rm ifft}({\bf c})$ and $F_n {\bf v} = {\rm fft}({\bf v})$, where ``${\rm fft}$" is a Matlab-type symbol, an abbreviation of fast Fourier transformation. So the fast procedure for computing the vector ${\bf y}$ is
$$
{\bf y} = {\rm fft}\big( {\rm ifft}({\bf c}) .\ast {\rm fft}({\bf x}_2) .\ast \cdots .\ast {\rm fft}({\bf x}_m)\big),
$$
where ${\bf u}.\ast{\bf v}$ multiplies  two vectors element-by-element. Similarly, for vectors ${\bf x}_1,{\bf x}_2,\dots,{\bf x}_m \in \mathbb{C}^n$, we have
$$
\alpha = \ten{C} \times_1 {\bf x}_1 \times_2 {\bf x}_2 \cdots \times_m {\bf x}_m = \ten{D} \times_1 F_n{\bf x}_1 \times_2 F_n{\bf x}_2 \cdots \times_m F_n{\bf x}_m,
$$
and the fast procedure for computing the scalar $\alpha$ is
$$
\alpha = {\rm ifft}({\bf c})^\top \big( {\rm fft}({\bf x}_1) .\ast {\rm fft}({\bf x}_2) .\ast \cdots .\ast {\rm fft}({\bf x}_m)\big).
$$
Since the computational complexity of either FFT or IFFT is $\ten{O}(n \log n)$, both two types of anti-circulant tensor-vector products can be obtained with complexity $\ten{O}\big((m+1)n \log n\big)$, which is much faster than the product of a general $n$-by-$n$ matrix with a vector.

For deriving the fast algorithm for Hankel tensor-vector products, we embed a Hankel tensor into a larger anti-circulant tensor. Let $\ten{H} \in \mathbb{C}^{n_1 \times n_2 \times \dots \times n_m}$ be a Hankel tensor with the generating vector ${\bf h}$. Denote $\ten{C}_\ten{H}$ as the anti-circulant tensor of order $m$ and dimension  $d_\ten{H}=n_1+n_2+\dots+n_m-m+1$ with the compressed generating vector ${\bf h}$. Then we will find out that $\ten{H}$ is in the ``upper left frontal'' corner of $\ten{C}_\ten{H}$ as shown in Figure \ref{embed}.
\begin{figure}[t]
  \centering
  \includegraphics[width=250pt]{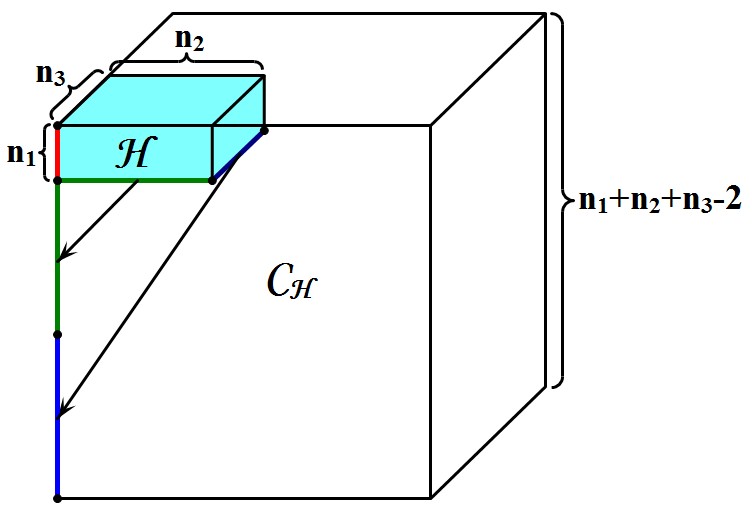}
  \caption{Embed a Hankel tensor into an anti-circulant tensor.}\label{embed}
\end{figure}
Hence, we have
\[
\begin{split}
&\ten{C}_\ten{H} \times_2 \begin{bmatrix} {\bf x}_2 \\ {\bf 0} \end{bmatrix} \dots \times_m \begin{bmatrix} {\bf x}_m \\ {\bf 0} \end{bmatrix} = \begin{bmatrix} \ten{H} \times_2 {\bf x}_2 \dots \times_m {\bf x}_m \\ {\bf \dag} \end{bmatrix}, \\
&\ten{C}_\ten{H} \times_1 \begin{bmatrix} {\bf x}_1 \\ {\bf 0} \end{bmatrix} \dots \times_m \begin{bmatrix} {\bf x}_m \\ {\bf 0} \end{bmatrix} = \ten{H} \times_1 {\bf x}_1 \dots \times_m {\bf x}_m,
\end{split}
\]
so that the Hankel tensor-vector products can be realized by multiplying a larger anti-circulant tensor by some augmented vectors. Therefore, the fast procedure for computing ${\bf y} = \ten{H} \times_2 {\bf x}_2 \dots \times_m {\bf x}_m$ is
$$
\left\{
\begin{array}{l}
\tilde{\bf x}_p = [{\bf x}_p^\top,\underbrace{0,0,\dots,0}_{d_\ten{H}-n_p}]^\top,\ p = 2,3,\dots,m, \\
\tilde{\bf y} = {\rm fft}\big( {\rm ifft}({\bf h}) .\ast {\rm fft}(\tilde{\bf x}_2) .\ast \cdots .\ast {\rm fft}(\tilde{\bf x}_m)\big), \\
{\bf y} = \tilde{\bf y}(0:n_1-1),
\end{array}
\right.
$$
and the fast procedure for computing $\alpha = \ten{H} \times_1 {\bf x}_1 \times_2 {\bf x}_2 \dots \times_m {\bf x}_m$ is
$$
\left\{
\begin{array}{l}
\tilde{\bf x}_p = [{\bf x}_p^\top,\underbrace{0,0,\dots,0}_{d_\ten{H}-n_p}]^\top,\ p = 1,2,\dots,m, \\
\alpha = {\rm ifft}({\bf h})^\top \big({\rm fft}(\tilde{\bf x}_1) .\ast {\rm fft}(\tilde{\bf x}_2) .\ast \cdots .\ast {\rm fft}(\tilde{\bf x}_m)\big).
\end{array}
\right.
$$
Moreover, the computational complexity is
$\ten{O}\big((m+1)d_\ten{H} \log d_\ten{H}\big)$. When the Hankel
tensor is a square tensor, the complexity is at the level
$\ten{O}(m^2n \log mn)$, which is much smaller than the complexity
$\ten{O}(n^m)$ of non-structured products.

Apart from the low computational complexity, our algorithm for Hankel tensor-vector products has two advantages. One is that this scheme is compact, that is, there is no redundant element in the procedure. It is not required to form the Hankel tensor explicitly. Just the generating vector is needed. Another advantage is that our algorithm treats the tensor as an ensemble instead of multiplying the tensor by vectors mode by mode.

\begin{example}
We choose $3^{\rm th}$-order square Hankel tensors $\ten{H}$ of
different sizes. We compute the tensor-vector products
$\ten{H}{\bf x}^{m-1}$ by using our fast algorithm and the
non-structured algorithm directly based on the definition. The
comparison of the running times is shown in Figure \ref{product}.
From the results, we can see that the running time of our algorithm
increases far slowly than that of the non-structured algorithm just
as the theoretical analysis. Moreover, the difference in
running times is not only  the low computational complexity,
but also  the absence of forming the Hankel tensor explicitly
in our algorithm.

\begin{figure}[t]
  \centering
  \includegraphics[width=390pt]{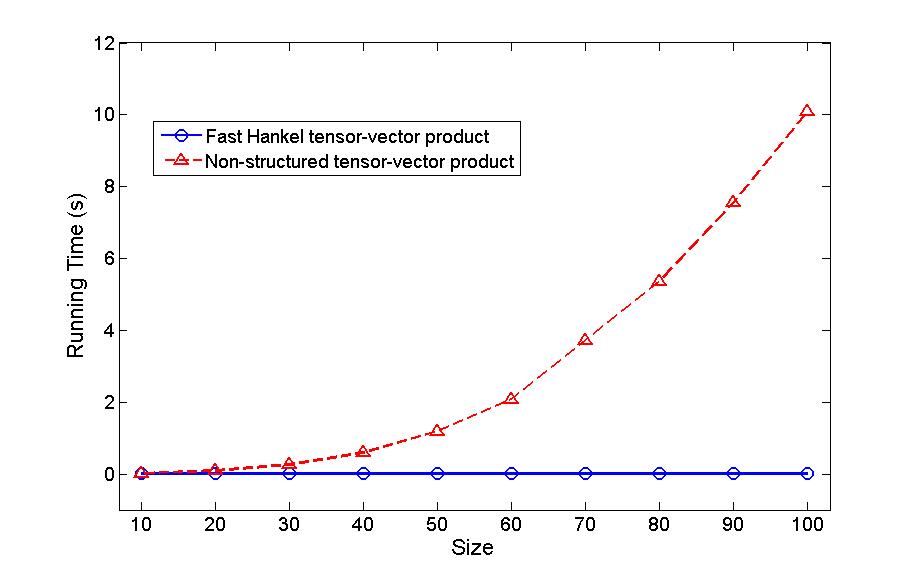}
  \caption{The running times of 100 products for each size and algorithm.}\label{product}
\end{figure}
\end{example}

For BAAB and BHHB cases, we also have fast algorithms for the tensor-vector products. Let $\ten{C}$ be a BAAB tensor of order $m$ with the compressed generating matrix $C \in \mathbb{C}^{n \times N}$. Since $\ten{C}$ can be diagonalized by $F_N \otimes F_n$, i.e.,
$$
\ten{C} = \ten{D} (F_N \otimes F_n)^m,
$$
we have for vectors ${\bf x}_2,{\bf x}_3,\dots,{\bf x}_m \in \mathbb{C}^{nN}$
$$
{\bf y} = \ten{C} \times_2 {\bf x}_2 \cdots \times_m {\bf x}_m = (F_N \otimes F_n) \big(\ten{D} \times_2 (F_N \otimes F_n){\bf x}_2 \cdots \times_m (F_N \otimes F_n){\bf x}_m\big).
$$
Recall the vectorization operator and its inverse operator
\[
\begin{split}
{\rm vec}(A) &= [A_{:,0}^\top,A_{:,1}^\top,\dots,A_{:,N-1}^\top]^\top \in \mathbb{C}^{nN}, \\
{\rm vec}_{n,N}^{-1}({\bf v}) &= [{\bf v}_{0:n-1},{\bf v}_{n:2n-1},\dots,{\bf v}_{(N-1)n:Nn-1}] \in \mathbb{C}^{n \times N},
\end{split}
\]
for matrix $A \in \mathbb{C}^{n \times N}$ and vector ${\bf v} \in \mathbb{C}^{nN}$, and the relation holds
$$
(B \otimes A){\bf v} = {\rm vec}\big(A \cdot {\rm vec}_{n,N}^{-1}({\bf v}) \cdot B^\top\big).
$$
So $(F_N \otimes F_n){\bf x}_p = {\rm vec}\big(F_n \cdot {\rm vec}_{n,N}^{-1}({\bf x}_p) \cdot F_N\big)$ can be computed by 2D fast Fourier transformation (FFT2). Then the fast procedure for computing ${\bf y} = \ten{C} \times_2 {\bf x}_2 \dots \times_m {\bf x}_m$ is
$$
\left\{
\begin{array}{l}
X_p = {\rm vec}_{n,N}^{-1}({\bf x}_p),\ p = 2,3,\dots,m, \\
Y = {\rm fft2}\big( {\rm ifft2}(C) .\ast {\rm fft2}(X_2) .\ast \cdots .\ast {\rm fft2}(X_m)\big), \\
{\bf y} = {\rm vec}(Y),
\end{array}
\right.
$$
and the fast procedure for computing $\alpha = \ten{C} \times_1 {\bf x}_1 \times_2 {\bf x}_2 \dots \times_m {\bf x}_m$ is
$$
\left\{
\begin{array}{l}
X_p = {\rm vec}_{n,N}^{-1}({\bf x}_p),\ p = 1,2,\dots,m, \\
\alpha = \big\langle{\rm ifft2}(C),{\rm fft2}(X_1) .\ast {\rm fft2}(X_2) .\ast \cdots .\ast {\rm fft2}(X_m)\big\rangle,
\end{array}
\right.
$$
where $\langle A,B \rangle$ denotes
$$
\langle A,B \rangle = \sum_{j,k} A_{jk} B_{jk}.
$$

For a BHHB tensor $\ten{H}$ with the generating matrix $H$, we do the embedding twice. First we embed each Hankel block into a larger anti-circulant block, and then we embed the block Hankel tensor with anti-circulant blocks into a BAAB tensor $\ten{C}_\ten{H}$. Notice that the compressed generating matrix of $\ten{C}_\ten{H}$ is exactly the generating matrix of $\ten{H}$. Hence we have the fast procedure for computing ${\bf y} = \ten{H} \times_2 {\bf x}_2 \dots \times_m {\bf x}_m$
$$
\left\{
\begin{array}{l}
\tilde{X}_p =\underbrace{\begin{bmatrix} {\rm vec}_{n_p,N_p}^{-1}({\bf x}_p) & {\rm O} \\ {\rm O} & {\rm O} \end{bmatrix}}_{N_1+N_2+\cdots+N_m-m+1} \hspace{-8pt}\left.\begin{matrix} \ \\ \ \end{matrix}\right\}{}^{n_1+n_2+ \cdots+n_m-m+1},\ p = 2,3,\dots,m, \\
\tilde{Y} = {\rm fft2}\big( {\rm ifft2}(H) .\ast {\rm fft2}(\tilde{X}_2) .\ast \cdots .\ast {\rm fft2}(\tilde{X}_m)\big), \\
{\bf y} = {\rm vec}\big(\tilde{Y}(0:n_1-1,0:N_1-1)\big).
\end{array}
\right.
$$
Sometimes in applications there is no need to do the vectorization in the last line, and we just keep it as a matrix for later use. We also have the fast procedure for computing $\alpha = \ten{H} \times_1 {\bf x}_1 \times_2 {\bf x}_2 \dots \times_m {\bf x}_m$
$$
\left\{
\begin{array}{l}
\tilde{X}_p =\underbrace{\begin{bmatrix} {\rm vec}_{n_p,N_p}^{-1}({\bf x}_p) & {\rm O} \\ {\rm O} & {\rm O} \end{bmatrix}}_{N_1+N_2+\cdots+N_m-m+1} \hspace{-8pt}\left.\begin{matrix} \ \\ \ \end{matrix}\right\}{}^{n_1+n_2+\cdots+n_m-m+1},\ p = 2,3,\dots,m, \\
\alpha = \big\langle{\rm ifft2}(H),{\rm fft2}(\tilde{X}_1) .\ast {\rm fft2}(\tilde{X}_2) .\ast \cdots .\ast {\rm fft2}(\tilde{X}_m)\big\rangle.
\end{array}
\right.
$$
Similarly, we can also derive the fast algorithms for higher-level block Hankel tensor-vector products using the multi-dimensional FFT.

\section{Exponential Data Fitting}

Exponential data fitting is very important in many applications in scientific computing and engineering, which represents the signals as a sum of exponentially damped sinusoids. The computations and applications of exponential data fitting are generally studied, and the reader  is interested in these topics can refer to \cite{pereyra2010exponential}. Furthermore, Papy et al. introduced a higher-order tensor approach into exponential data fitting in \cite{papy2005exponential,papy2009exponential} by connecting it with the Vandermonde decomposition of a Hankel tensor. And they showed that the tensor approach can do better than the classical matrix approach.

\subsection{Hankel Tensor Approach}

Assume that we get a one-dimensional noiseless signal with $N$ complex samples $\{x_n\}_{n=0}^{N-1}$, and this signal is modelled as a sum of $K$ exponentially damped complex sinusoids, i.e.,
$$
x_n = \sum_{k=1}^K a_k \exp({\bf i}\varphi_k) \exp\big((-\alpha_k+{\bf i}\omega_k) n \Delta t\big),
$$
where ${\bf i} = \sqrt{-1}$, $\Delta t$ is the sampling interval, and the amplitudes $a_k$, the phases $\varphi_k$, the damping factors $\alpha_k$, and the pulsations $\omega_k$ are the parameters of the model. The signal can also be expressed as
$$
x_n = \sum_{k=1}^K c_k z_k^n,
$$
where $c_k = a_k \exp({\bf i}\varphi_k)$ and $z_k = \exp\big((-\alpha_k+{\bf i}\omega_k) \Delta t\big)$. Here $c_k$ is called the $k$-th complex amplitude including the phase, and $z_k$ is called the $k$-th pole of the signal. A part of the aim of exponential data fitting is to estimate the poles $\{z_k\}_{k=1}^K$ from the data $\{x_n\}_{n=0}^{N-1}$.

Denote vector ${\bf x} = (x_n)_{n=0}^{N-1}$. Then we construct a
Hankel tensor $\ten{H}$ of a fixed order $m$ and size $I_1 \times
I_2 \times \dots \times I_m$ with the generating vector ${\bf x}$.
The order $m$ can be chosen arbitrarily and the size $I_p$ of each
dimension should be no less than $K$. And we always deal with
$3$-order tensors in this paper. Moreover, it is advised to
construct a Hankel tensor as close to a square tensor as well (cf.
\cite{papy2005exponential,papy2009exponential,pereyra2010exponential}).
Papy et al. verified that the Vandermonde decomposition of
$\ten{H}$ is
$$
\ten{H} = \ten{C} \times_1 Z_1^\top \times_2 Z_2^\top \dots \times_m Z_m^\top,
$$
where $\ten{C}$ is a diagonal tensor with diagonal entries $\{c_k\}_{k=1}^K$, and each $Z_p$ is a Vandermonde matrix
$$
Z_p^\top = \begin{bmatrix}
1 & z_1 & z_1^2 & \cdots & z_1^{I_p-1} \\
1 & z_2 & z_2^2 & \cdots & z_2^{I_p-1} \\
\vdots & \vdots & \vdots & \vdots & \vdots \\
1 & z_K & z_K^2 & \cdots & z_K^{I_p-1} \\
\end{bmatrix}.
$$
So the target will be attained, if we obtain the Vandermonde decomposition of this Hankel tensor $\ten{H}$. And the Vandermonde matrix can be estimated by applying the total least square (TLS, cf. \cite{golub2012matrix}) to the factor matrix in the higher-order singular value decomposition (HOSVD, cf. \cite{de2000multilinear,kolda2009tensor}) of the best rank-$(K,K,\dots,K)$ approximation (cf. \cite{de2000best,kolda2009tensor}) of $\ten{H}$. Therefore, computing the HOSVD of the best low rank approximation of a Hankel tensor is a main part in exponential data fitting.

\subsection{Fast HOOI for Hankel Tensors}

Now we have a look at the best rank-$(R_1,R_2,\dots,R_m)$
approximation of a tensor. This concept was first introduced by De
Lathauwer et al. in \cite{de2000best}, and they also proposed an
effective algorithm called higher-order orthogonal iterations
(HOOI). There are  other algorithms with faster convergence such
as \cite{derayleigh} and \cite{elden2009newton} proposed, and one
can refer to \cite{kolda2009tensor} for more details. However, HOOI
is still very popular, because it is so simple and still effective in
applications. Papy et al. chose HOOI in \cite{papy2005exponential},
so we aim at modifying the general HOOI into a specific algorithm
for Hankel tensors in order to get higher speed for exponential data
fitting. And we will focus on the square Hankel tensor, since it is
preferred in exponential data fitting.

The original HOOI algorithm for general tensors is as the following.
\begin{algorithm}
HOOI for computing the best rank-$(R_1,R_2,\dots,R_m)$ approximation of tensor $\ten{A} \in \mathbb{C}^{I_1 \times I_2 \times \dots \times I_m}$.

\parbox[t]{360pt}{\rm
Initialize $U_p \in \mathbb{C}^{I_p \times R_p}$ $(p = 1:m)$ by HOSVD of $\ten{A}$ \\
Repeat \\
\hspace*{15pt} for $p = 1:m$ \\
\hspace*{35pt} $U_p \leftarrow R_p$ leading left singular vectors of \\
\hspace*{68pt} ${\rm Unfold}_p(\ten{A} \times_1 \bar{U}_1 \cdots \times_{p-1} \bar{U}_{p-1} \times_{p+1} \bar{U}_{p+1} \cdots \times_m \bar{U}_m)$ \\
\hspace*{15pt} end \\
Until convergence \\
$\ten{S} = \ten{A} \times_1 \bar{U}_1 \times_2 \bar{U}_2 \cdots \times_m \bar{U}_m$.
}
\end{algorithm}

As employing this algorithm to Hankel tensors, we have to do some modifications to involve the structures. First, the computation of HOSVD of a Hankel tensor  can be more efficient. The HOSVD is obtained by computing the SVD of every unfolding of $\ten{A}$,  Badeau and Boyer in \cite{badeau2008fast} pointed out that many columns in the unfolding of a structured tensor are redundant, so that we can remove them for a smaller scale. Particularly, the mode-$p$ unfolding of a Hankel tensor $\ten{H}$ after removing the redundant columns is a rectangular Hankel matrix of size $I_p \times (d_\ten{H}-I_p+1)$ with the same generating vector with $\ten{H}$
$$
{\rm Unfold}_p(\ten{H}) \rightarrow
\begin{bmatrix}
h_0 & h_1 & \cdots & h_{d_\ten{H}-I_p-1} & h_{d_\ten{H}-I_p} \\
h_1 & \iddots & \iddots & \iddots & \vdots \\
\vdots & \iddots & \iddots & \iddots & h_{d_\ten{H}-2}  \\
h_{I_p-1} & h_{I_p} & \cdots & h_{d_\ten{H}-2} & h_{d_\ten{H}-1}
\end{bmatrix}.
$$
The SVD for Hankel matrices (called Symmetric SVD or Takagi Factorization) can be computed more efficiently than for general matrices by using the algorithm in \cite{browne2009lanczos,xu2008fast}. This is the first point that can be modified.

Next, there are plenty of Hankel tensor-matrix products in the above algorithm, and they can be complemented by using our fast Hankel tensor-vector products. For instance, the tensor-matrix product
$$
(\ten{H} \times_2 \bar{U}_2 \dots \times_m \bar{U}_m)_{:,i_2,\dots,i_m} = \ten{H} \times_2 (\bar{U}_2)_{:,i_2} \dots \times_m (\bar{U}_m)_{:,i_m},
$$
and others are the same. So all the Hankel tensor-matrix products  can be computed by our fast algorithm.

Finally, if the Hankel tensor that we construct is further a square
tensor, then we can expect to obtain a \emph{higher-order symmetric
singular value decomposition} or called a \emph{higher-order Takagi
factorization} (HOTF), that is, $U_1=U_2=\dots=U_m$. Hence we do not
need to update them separately in each step. Therefore, the HOOI
algorithm modified for square Hankel tensors is as follows.
\begin{algorithm}
HOOI for computing the best rank-$(R,R,\dots,R)$ approximation of
square Hankel tensor $\ten{H} \in \mathbb{C}^{I \times I \times
\dots \times I}$.

\parbox[t]{360pt}{\rm
$U \leftarrow R$ leading left singular vectors of ${\rm hankel}({\bf h}_{0:I-1},{\bf h}_{I-1:nI-m})$ \\
Repeat \\
\hspace*{15pt} $U \leftarrow R$ leading left singular vectors of ${\rm Unfold}_1(\ten{H} \bar{U}^{m-1})$ \\
Until convergence \\
$\ten{S} = \ten{H} \bar{U}^m$.
}
\end{algorithm}

\subsection{2-Dimensional Exponential Data Fitting}

Many real-world signals are multi-dimensional, and the exponential model is also useful for these problems, such that 2D curve fitting, 2D signal recovery, etc. The multi-dimensional signals are certainly more complicated than the one-dimensional signals. Thus one Hankel tensor is never enough for representing the signals. So we need multi-level block Hankel tensors for multi-dimensional signals. We take the 2D exponential data fitting for an example to illustrate our block approach.

Assume that there is a 2D noiseless signal with $N_1 \times N_2$ complex samples $\{x_{n_1n_2}\}_{\begin{subarray}{l} n_1=0,1,\dots,N_1-1 \\ n_2 = 0,1,\dots,N_2-1 \end{subarray}}$ which is modelled as a sum of $K$ exponential items
$$
x_{n_1n_2} = \sum_{k=1}^K a_k \exp({\bf i}\varphi_k) \exp\big((-\alpha_k+{\bf i}\omega_k)n_1 \Delta t_1 + (-\beta_k+{\bf i}\nu_k)n_2 \Delta t_2\big),
$$
where the meanings of  parameters are the same as those of 1D signals. Also, the 2D signal can be rewritten into a compact form
$$
x_{n_1n_2} = \sum_{k=1}^K c_k z_{1,k}^{n_1} z_{2,k}^{n_2}.
$$
And our aim is still to estimate the poles $\{z_{1,k}\}$ and $\{z_{2,k}\}$ of the signal from the samples $\{x_{n_1n_2}\}_{\begin{subarray}{l} n_1=0,1,\dots,N_1-1 \\ n_2 = 0,1,\dots,N_2-1 \end{subarray}}$ that we receive.

Denote matrix $X = (x_{n_1 n_2})_{N_1 \times N_2}$. Then we construct a BHHB tensor $\ten{H}$ with the generating matrix $X$ of a fixed order $m$, size $(I_1 J_1) \times (I_2 J_2) \times \dots \times (I_m J_m)$, and block size $I_1 \times I_2 \times \dots \times I_m$. The order $m$ can be selected arbitrarily and the size $I_p$ and $J_p$ of each dimension should be no less than $K$. Then the BHHB tensor $\ten{H}$ has the level-2 Vandermonde decomposition
$$
\ten{H} = \ten{C} \times_1 (Z_{2,1} \oslash Z_{1,1})^\top \times_2 (Z_{2,2} \oslash Z_{1,2})^\top \dots \times_m (Z_{2,m} \oslash Z_{1,m})^\top,
$$
where $\ten{C}$ is a diagonal tensor with diagonal entries $\{c_k\}_{k=1}^K$, each $Z_{1,p}$ or $Z_{2,p}$ is a Vandermonde matrix
$$
Z_{1,p}^\top = \begin{bmatrix}
1 & z_{1,1} & z_{1,1}^2 & \cdots & z_{1,1}^{I_p-1} \\
1 & z_{1,2} & z_{1,2}^2 & \cdots & z_{1,2}^{I_p-1} \\
\vdots & \vdots & \vdots & \vdots & \vdots \\
1 & z_{1,K} & z_{1,K}^2 & \cdots & z_{1,K}^{I_p-1} \\
\end{bmatrix},\
Z_{2,p}^\top = \begin{bmatrix}
1 & z_{2,1} & z_{2,1}^2 & \cdots & z_{2,1}^{J_p-1} \\
1 & z_{2,2} & z_{2,2}^2 & \cdots & z_{2,2}^{J_p-1} \\
\vdots & \vdots & \vdots & \vdots & \vdots \\
1 & z_{2,K} & z_{2,K}^2 & \cdots & z_{2,K}^{J_p-1} \\
\end{bmatrix},
$$
and the notation ``$\oslash$" denotes the column-wise Kronecker product of two matrices with the same column sizes, i.e.,
$$
(A \oslash B)(:,j) = A(:,j) \otimes B(:,j).
$$
So the target will be attained, if we obtain the level-2 Vandermonde decomposition of this BHHB tensor $\ten{H}$.

We can use HOOI as well to compute the best rank-$(K,K,\dots,K)$ approximation of the BHHB tensor $\ten{H}$
$$
\ten{H} = \ten{S} \times_1 U_1^\top \times_2 U_2^\top \dots \times_m U_m^\top,
$$
where $\ten{S} \in \mathbb{C}^{K \times K \times \dots \times K}$ is the core tensor and $U_p \in \mathbb{C}^{(I_p J_p) \times K}$ has orthogonal columns. Note that the tensor-vector products in HOOI should be computed by our fast algorithm for BHHB tensor-vector products in Section 3. Then $U_p$ and $Z_{2,p} \oslash Z_{1,p}$ have the common column space, which is equivalent to that there is a nonsingular matrix $T$ such that
$$
Z_{2,p} \oslash Z_{1,p} = U_p T.
$$
Denote
\[
\begin{array}{l}
A^{1\uparrow} = \big[A_{0:I-2,:}^\top,A_{I,2I-2,:}^\top,\dots,A_{(J-1)I:JI-2,:}^\top\big]^\top, \\
A^{1\downarrow} = \big[A_{1:I-1,:}^\top,A_{I+1,2I-1,:}^\top,\dots,A_{(J-1)I+1:JI-1,:}^\top\big]^\top, \\
A^{2\uparrow} = A_{0:(J-1)I-1,:}, \\
A^{2\downarrow} = A_{I:JI-1,:},
\end{array}
\]
for matrix $A \in \mathbb{C}^{(IJ) \times K}$. Then it is easy to verify that
\[
\begin{split}
(Z_{2,p} \oslash Z_{1,p})^{1\uparrow} D_1  &= (Z_{2,p} \oslash Z_{1,p})^{1\downarrow}, \\
(Z_{2,p} \oslash Z_{1,p})^{2\uparrow} D_2  &= (Z_{2,p} \oslash Z_{1,p})^{2\downarrow},
\end{split}
\]
where $D_1$ is a diagonal matrix with diagonal entries $\{z_{1,k}\}_{k=1}^K$ and $D_2$ is a diagonal matrix with diagonal entries $\{z_{2,k}\}_{k=1}^K$. Then we have
\[
\begin{split}
U_p^{1\uparrow} (T D_1 T^{-1}) &= U_p^{1\downarrow}, \\
U_p^{2\uparrow} (T D_2 T^{-1}) &= U_p^{2\downarrow}.
\end{split}
\]
Therefore, if we obtain two matrices $W_1$ and $W_2$ satisfying that
\[
\begin{split}
U_p^{1\uparrow} W_1 &= U_p^{1\downarrow}, \\
U_p^{2\uparrow} W_2 &= U_p^{2\downarrow},
\end{split}
\]
then $W_1$ and $W_2$ share the same eigenvalues with $D_1$ and $D_2$, respectively. Equivalently, the eigenvalues of $W_1$ and $W_2$ are exactly the poles of the first and second dimension, respectively. Furthermore, we also choose the total least square as in \cite{papy2005exponential} for solving the above two equations, since the noise on both sides should be taken into consideration.

\begin{example}
We will show the effectiveness by a $2$-dimensional $2$-peak example partly from \cite{papy2005exponential},
\[
\begin{split}
y_{n_1 n_2} &= x_{n_1 n_2} + e_{n_1 n_2} \\
&= \exp\big((-0.01+2\pi{\bf i}0.20)n_1\big) \cdot \exp\big((-0.02+2\pi{\bf i}0.18)n_2\big) \\
&\hspace{1pt}+ \exp\big((-0.02+2\pi{\bf i}0.22)n_1\big) \cdot \exp\big((-0.01-2\pi{\bf i}0.20)n_2\big) + e_{n_1 n_2},
\end{split}
\]
where the first dimension of this signal is the same as the example in \cite{papy2005exponential} and $e_{n_1 n_2}$ is the noise. Since this signal has two peaks, the BHHB tensor constructed following $\{x_{n_1 n_2}\}$ is supposed to be rank-2. Figure \ref{2DSV} illustrates the variation of the mode-1 singular values of a BHHB tensor of size $6 \times 6 \times 6$ and block size $3 \times 3 \times 3$ with different levels of noise. When the signal is noiseless, there are only two nonzero mode-1 singular values, and others are about the machine epsilon, thus is regarded as zero. When the noise is added on, the ``zero singular values'' are at the same level with the noise, and those two largest singular values can still be separated easily from the others.
\begin{figure}[t]
  \centering
  \includegraphics[width=320pt]{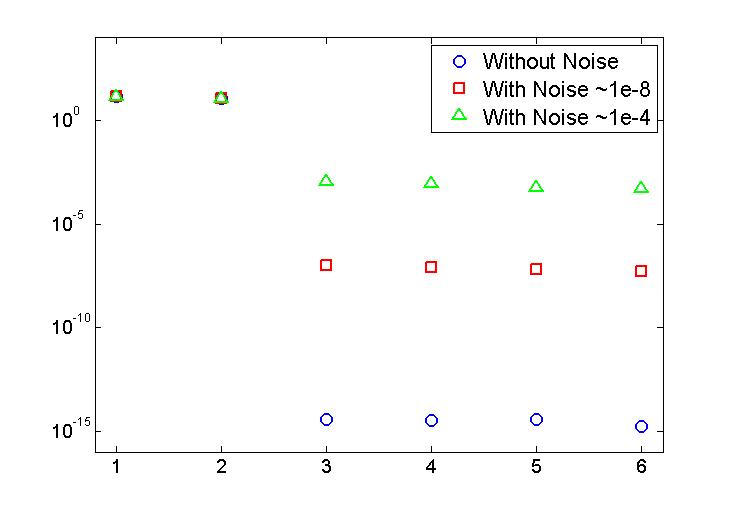}
  \caption{The mode-1 singular values of the constructed BHHB tensor.}\label{2DSV}
\end{figure}

Then we compute the HOTF of the best rank-(2,2,2) approximation of a BHHB tensor of size $30 \times 30 \times 30$ and block size $6 \times 6 \times 6$, and apply the total least square twice to estimate the poles, as described previously. Since we know the exact solution
$$
\begin{array}{ll}
z_{11} = \exp(-0.01+2\pi{\bf i}0.20), & z_{12} = \exp(-0.02+2\pi{\bf i}0.22), \\
z_{21} = \exp(-0.02+2\pi{\bf i}0.18), & z_{22} = \exp(-0.01-2\pi{\bf i}0.20),
\end{array}
$$
the relative errors are shown in Figure \ref{2DError}. We conclude that the error can attain the same level with the noise by employing our algorithm for 2D exponential data fitting.
\begin{figure}[t]
  \centering
  \includegraphics[width=300pt]{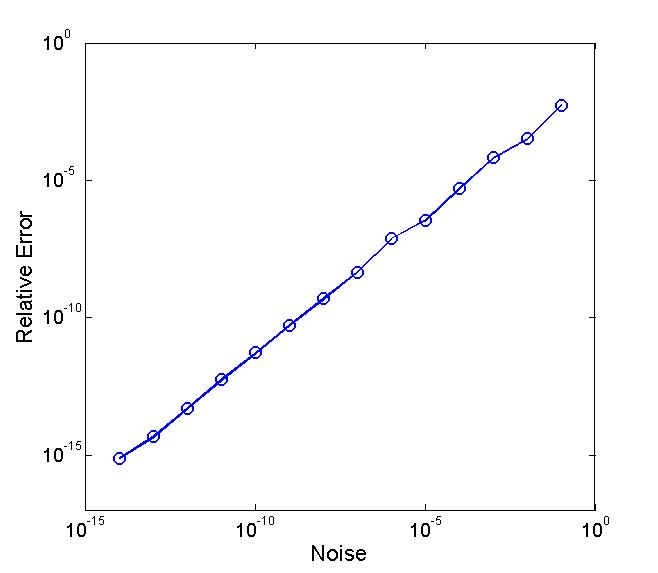}
  \caption{The relative errors with different levels of noise.}\label{2DError}
\end{figure}
\end{example}

\section{Conclusions}

We propose a fast algorithm for Hankel tensor-vector products with computational complexity $\ten{O}\big((m+1) d_\ten{H} \log d_\ten{H}\big)$, where $d_\ten{H} = n_1+n_2+\dots+n_m-m+1$. This fast algorithm is derived by embedding the Hankel tensor into a larger anti-circulant tensor, which can be diagonalized by the Fourier matrix. The fast algorithm for block Hankel tensors with Hankel blocks is also described. Furthermore, the fast Hankel and BHHB tensor-vector products are applied to HOOI in 1D and 2D exponential data fitting, respectively. The numerical experiments show the efficiency and effectiveness of our algorithms. Finally, this fast scheme should be introduced into every algorithm which involves Hankel tensor-vector products to improve its performance.

\section*{Acknowledgements}
Weiyang Ding would like to thank Prof. Sanzheng Qiao for the useful discussions on fast algorithms for Hankel matrices. We also thank Professors Lars Eld\'{e}n and Michael K. Ng for their detailed comments.

{\small
\bibliographystyle{siam}
\nocite{*}
\bibliography{hankel}
}

\end{document}